\tikzstyle{vertex}=[circle, fill=black, minimum size=5pt, inner sep=0]
\newtheorem{theorem}{Theorem}[section]
\newtheorem{lemma}[theorem]{Lemma}
\newtheorem{proposition}[theorem]{Proposition}
\newtheorem{observation}[theorem]{Observation}
\newtheorem{problem}[theorem]{Problem}
\newtheorem{conjecture}[theorem]{Conjecture}
\newtheorem{claim}{Claim}
\newcommand{\claimproofend}{\hspace*{.1mm}\hspace{\fill}}
\newcommand\Setx[1] {\left\{{#1}\right\}}
\DeclareMathOperator{\RP}{\mathbb RP}
\DeclareMathOperator{\RR}{\mathbb R}
\begin{document}
\title{\textbf{Criticality in Sperner's Lemma}} \author{Tom\'{a}\v{s}
  Kaiser$^{\:1}$\and Mat\v{e}j Stehl\'{\i}k$^{\:2}$ \and Riste
  \v{S}krekovski$^{\:3}$}

\date{}

\maketitle
\begin{flushright}
  \emph{To the memory of Frank H. Lutz (1968--2023)\qquad}
\end{flushright}\vspace{4mm}

\begin{abstract}
  We answer a question posed by T.~Gallai in 1969 concerning criticality in
  Sperner's lemma, listed as Problem~9.14 in the collection of Jensen and Toft
  [Graph coloring problems, John Wiley \& Sons, Inc., New York, 1995].
  
  Sperner's lemma states that if a labelling of the vertices of a triangulation
  of the $d$-simplex $\Delta^d$ with labels $1, 2, \ldots, d+1$ has the
  property that (i) each vertex of $\Delta^d$ receives a distinct label, and
  (ii) any vertex lying in a face of $\Delta^d$ has the same label as one of
  the vertices of that face, then there exists a rainbow facet (a facet whose
  vertices have pairwise distinct labels). For $d\leq 2$, it is not difficult
  to show that for every facet $\sigma$, there exists a labelling with the
  above properties where $\sigma$ is the unique rainbow facet. For every $d\geq
  3$, however, we construct an infinite family of examples where this is not
  the case, which implies the answer to Gallai's question as a corollary. The
  construction is based on the properties of a $4$-polytope which had been used
  earlier to disprove a claim of T.~S.~Motzkin on neighbourly polytopes.
\end{abstract}
\footnotetext[1]{Department of Mathematics and European Centre of Excellence
  NTIS (New Technologies for the Information Society), University of West
  Bohemia, Pilsen, Czech Republic. E-mail: \texttt{kaisert@kma.zcu.cz}. Supported
  by project GA20-09525S of the Czech Science Foundation.}%
\footnotetext[2]{Université Paris Cité, CNRS, IRIF, F-75006, Paris, France.
  E-mail: \texttt{matej@irif.fr}.  Partially supported by ANR project DISTANCIA
  (ANR-17-CE40-0015).}%
\footnotetext[3]{Faculty of Mathematics and Physics, University of Ljubljana,
  1000 Ljubljana \& Faculty of Information Studies, 8000 Novo Mesto, Slovenia.
  E-mail: \texttt{skrekovski@gmail.com}.  Partially supported by  the Slovenian
  Research Agency Program P1-0383, Project J1-3002, and
  BI-FR/22-23-Proteus-011.}%

\section{Introduction}

A central result of combinatorial topology, Sperner's lemma~\cite{Spe28} has
found wide-ranging applications in diverse areas of mathematics and beyond; for
example, it can be used to give an elementary proof of Brouwer's fixed point
theorem~\cite[p.~117]{Mun84} from algebraic topology, and to solve various
fair-division problems, such as rental harmony~\cite{Su99,Sun14}.

Let $\Delta^d$ be a $d$-simplex with vertices $v_1, \ldots, v_{d+1}$, and let
$K$ be a finite triangulation of $\Delta^d$ (see Section~\ref{sec:terminology}
for a precise definition). A \emph{Sperner labelling} of $K$ is a map $\lambda:
V(K) \to \{1,\ldots, d+1\}$ such that $\lambda(v_i)=i$, and every vertex lying
in a face of $\Delta^d$ has the same label as one of the vertices of that face.
A simplex is said to be \emph{rainbow} if its vertices receive pairwise
distinct labels. The following classical result, proved by
Sperner~\cite{Spe28}, is usually referred to as \emph{Sperner's lemma} (see
e.g.~\cite[p.~423]{DLGMM19}, \cite[p.~3]{dL13}).

\begin{theorem}[Sperner's lemma]\label{thm:sperner}
  If $K$ is a finite triangulation of the $d$-simplex $\Delta^d$ and
  $\lambda:V(K) \to \{1, \ldots, d+1\}$ is any Sperner labelling of $K$, then
  there is a rainbow facet.
\end{theorem}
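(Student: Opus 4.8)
The plan is to prove a stronger, parity version of the statement by induction on $d$: the number of rainbow facets is odd, and in particular nonzero. The base case $d=0$ is immediate, since $\Delta^0$ is a single vertex, its only triangulation is that vertex, and the labelling condition forces its label to be $1$; thus there is exactly one rainbow facet.

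For the inductive step (with $d\geq 1$) I would introduce the notion of a \emph{door}: a $(d-1)$-simplex of $K$ whose vertices carry precisely the labels $1,2,\ldots,d$, each exactly once. The key local observation is a short case analysis on how many doors a given facet $\sigma$ of $K$ contains, organised by the multiset of labels appearing on $\sigma$. A rainbow facet (labels $1,\ldots,d+1$) contains exactly one door, namely the $(d-1)$-face opposite the vertex labelled $d+1$. A facet whose label set is exactly $\{1,\ldots,d\}$ (so some label repeats and $d+1$ is absent) contains exactly two doors. Every other facet contains none. Summing the number of doors over all facets of $K$ therefore yields a quantity congruent modulo $2$ to the number $R$ of rainbow facets.

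The same sum can be evaluated by counting, for each door $\tau$, how many facets contain it. An interior $(d-1)$-simplex lies in exactly two facets, while one lying on the boundary $\partial\Delta^d$ lies in exactly one. Hence the total is congruent modulo $2$ to the number of boundary doors. Here the Sperner condition enters decisively: a boundary $(d-1)$-simplex lies in some facet $F_i$ of $\Delta^d$, and by the labelling rule its labels form a subset of $\{1,\ldots,d+1\}\setminus\{i\}$; for this subset to equal $\{1,\ldots,d\}$ forces $i=d+1$. Thus every boundary door lies on the single facet $F_{d+1}$ spanned by $v_1,\ldots,v_d$. The restriction of $K$ to $F_{d+1}\cong\Delta^{d-1}$ inherits a Sperner labelling with labels $\{1,\ldots,d\}$, and its rainbow facets are exactly the boundary doors, so by the inductive hypothesis their number is odd.

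Combining the two counts gives $R\equiv(\text{number of boundary doors})\equiv 1\pmod 2$, whence $R$ is odd and in particular positive. The door-counting case analysis is routine once the definition is in place; the step I expect to demand the most care is the boundary analysis, where one must invoke the Sperner condition precisely to confine all boundary doors to $F_{d+1}$ and identify them with the lower-dimensional rainbow facets, so that the induction closes.
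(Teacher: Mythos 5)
The paper does not prove this statement: Sperner's lemma is quoted as a classical result with a citation to Sperner's 1928 paper, so there is no in-paper proof to compare against. Your argument is the standard parity proof and it is correct: the base case, the local count (a rainbow facet contains exactly one door, a facet with label set exactly $\{1,\ldots,d\}$ contains exactly two since precisely one label is repeated and it is repeated exactly twice, all other facets contain none), the double count via the fact that a $(d-1)$-simplex of a triangulated $d$-ball lies in one or two facets according as it is on the boundary or not, and the use of the Sperner condition to confine all boundary doors to the face $F_{d+1}$, where they are exactly the rainbow facets of the induced Sperner labelling of $\Delta^{d-1}$. This yields the stronger conclusion that the number of rainbow facets is odd, which is the form of the lemma usually proved in the references the paper cites; nothing in the paper requires the parity refinement, but it costs nothing and makes the induction close cleanly.
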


A natural question asked by Gallai in 1969 (see Problem~9.14 in Jensen and
Toft~\cite{JT95} and Problem~\ref{prob:gallai} below) concerns `criticality'
in Sperner's lemma. His question contains the following subproblem.

%

\begin{problem}\label{prob:unique}
  Given any triangulation $K$ of $\Delta^d$, and any facet $\sigma \in K$, is
  there a Sperner labelling of $K$ where $\sigma$ is the unique rainbow facet?
\end{problem}

We will see later that if one could answer Problem~\ref{prob:gallai} in the
affirmative, then the same would follow for Problem~\ref{prob:unique}.
%
In dimension $1$, the answer to Problem~\ref{prob:unique} is is easily seen to
be positive. The same can be shown in dimension $2$, as follows.
(Gallai in fact obtained a slightly stronger result, settling
Problem~\ref{prob:gallai} in the affirmative in dimension $2$;
see~\cite[Problem~9.14]{JT95}.)

\begin{theorem}\label{thm:2d}
  For every triangulation $K$ of $\Delta^2$ and every facet $\sigma \in K$,
  there exists a Sperner labelling of $K$ where $\sigma$ is the unique rainbow
  facet.
\end{theorem}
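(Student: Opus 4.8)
The plan is to realise the desired labelling through the \emph{interface} picture that underlies the standard proof of Theorem~\ref{thm:sperner}. Given any Sperner labelling of $K$, call an edge of $K$ a \emph{$12$-edge} if its two endpoints carry the labels $1$ and $2$, and inside each triangle join the midpoints of its $12$-edges by a segment. A triangle has zero, one, or two $12$-edges, and it has exactly one precisely when it is rainbow, so the union of these segments is a disjoint family of simple curves in $\Delta^2$ whose endpoints are either the rainbow facets (in the interior) or the crossing points of $12$-edges with $\partial\Delta^2$, all of which lie on the edge $v_1v_2$. My goal is to produce a Sperner labelling for which this family consists of a single arc joining $v_1v_2$ to $\sigma$, together with arbitrarily many closed curves and boundary-to-boundary arcs; such a labelling has $\sigma$ as its unique rainbow facet.

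For the construction I would first fix the \emph{label-$3$ region}. I choose a simply connected union of triangles $R$ containing the corner $v_3$ and having $\sigma$ on its boundary, so that exactly one vertex of $\sigma$ lies strictly inside $R$ while the other two lie outside; I meet neither $v_1v_2$ nor the corners $v_1,v_2$, and I label $3$ exactly the vertices interior to $R$. This is compatible with the Sperner condition along $v_1v_3$ and $v_2v_3$, and it leaves $\Delta^2\setminus R$ a disc. On this disc I must place labels $1$ and $2$ so that the induced $12$-interface is a single arc from $v_1v_2$ to the boundary point of $R$ at $\sigma$. Concretely I route one simple dual path from an edge of $K$ on $v_1v_2$ to $\sigma$, declare the triangulation edges it crosses to be the intended $12$-edges, note that this arc cuts the disc $\Delta^2\setminus R$ into two parts, and label the vertices of one part $1$ and those of the other $2$.

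The heart of the argument, and the step I expect to be the main obstacle, is to verify that this prescription determines a consistent labelling creating \emph{no} $12$-edges other than the intended ones, so that $\sigma$ is genuinely the only rainbow facet. This is exactly where planarity enters: a simple arc cutting the disc $\Delta^2\setminus R$ separates it into two well-defined sides (a Jordan-curve phenomenon), so the rule ``label $1$ on one side, $2$ on the other'' is globally well defined, and any accidental rainbow facet would show up as an extra endpoint of the interface family, which an innermost-arc or Euler-characteristic argument lets me cancel two at a time. I would finally treat the two degenerate configurations—when $\sigma$ is incident to a corner, or when $\sigma$ meets $v_1v_2$—separately, since there the region $R$ and the dual path degenerate. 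It is precisely the availability of this non-crossing curve structure that is special to $d=2$ and fails for $d\ge 3$, matching the dichotomy announced in the abstract.
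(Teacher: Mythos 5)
Your construction rests on two steps that you assert but do not prove, and they are exactly where the difficulty of Theorem~\ref{thm:2d} lies. The first is existence: you never argue why there is a simply connected union of triangles $R$ containing $v_3$, avoiding the edge $v_1v_2$ and the corners $v_1,v_2$, positioned so that $\sigma$ sits on its frontier in the way you require, nor why there is a simple dual path from an edge on $v_1v_2$ to $\sigma$ staying in the disc $\Delta^2\setminus R$. (Your description of $R$ is in fact internally inconsistent: if a vertex $a$ of $\sigma$ lies strictly inside the union of closed triangles $R$, then every triangle containing $a$ --- including $\sigma$ itself --- belongs to $R$, so the other two vertices of $\sigma$ cannot lie outside $R$.) This existence question is not a formality: in the paper it is precisely the step requiring Steinitz's theorem (planar triangulations are $3$-connected, hence so are their dual graphs) and Menger's theorem, which yield three internally vertex-disjoint dual paths joining $\sigma$ to the outer face of a sphere extension $K'$ of $K$. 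Your data encode the same picture --- the frontier of $R$ plays the role of two of the three paths and your dual path the third --- so some connectivity input of this kind is unavoidable and cannot be skipped.

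The second, more serious, gap is the verification that no unintended rainbow facet arises, which you yourself call the heart of the argument. Jordan-curve separation only makes the $1/2$-labelling well defined; it does not exclude extra rainbow triangles. Concretely, a triangle of $R$ with one vertex labelled $3$ and whose other two vertices lie on the frontier of $R$, one falling in the label-$1$ part and the other in the label-$2$ part (a ``chord'' configuration), is rainbow, and nothing in the conditions you impose on $R$ (simply connected, containing $v_3$, avoiding $v_1v_2$, one vertex of $\sigma$ inside) rules this out. The proposed remedy --- cancelling accidental rainbow facets ``two at a time'' by an innermost-arc or Euler-characteristic argument --- is not an argument: rainbow facets are determined by the labelling, and you describe no modification of the labelling that removes a pair of them while remaining Sperner and keeping $\sigma$ rainbow. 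Compare with how the paper kills all accidental rainbow facets in one stroke: because the three dual paths are \emph{internally} vertex-disjoint, every triangle other than $\sigma$ and the outer face is traversed by at most one of them, hence meets at most one of the three separating arcs, hence its vertices lie in the closure of at most two of the three regions and receive at most two labels. Your proof needs either an analogue of this disjointness built into the choice of $R$ and the path, or an explicit (and currently missing) relabelling procedure; as it stands, the central claim is unproved.
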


\begin{proof}
  Let $K$ be a triangulation of the $2$-simplex $\Delta^2$; add edges and faces
  as shown in Figure~\ref{fig:2d} to make it into a triangulation $K'$ of
  $S^2$. Let $\tau$ be the $2$-simplex corresponding to the outer face, and
  assume that $\sigma \in K$ is a given facet. Every planar triangulation is
  $3$-connected, so by Steinitz's theorem~\cite[p.~63]{MT01}, it is the graph
  of a convex polyhedron, and the dual graph is also $3$-connected. Hence, by
  Menger's theorem~\cite[p.~14]{MT01}, there exist three internally
  vertex-disjoint paths between $\sigma$ and $\tau$ in the dual graph. The
  union of these paths (in the given embedding) has three faces; let us assign
  the vertices of $K'$ embedded in the $i$-th face ($1\leq i\leq 3$) label $i$.
  It is not hard to see that this determines a Sperner labelling of $K$.
  Furthermore, if a $3$-cycle of $K'$ contains vertices of all three labels and
  differs from the face boundaries of $\sigma$ and $\tau$, then it separates a
  vertex of $\sigma$ from a vertex of $\tau$, so it is not a
  face boundary. Thus, $\sigma$ is the unique rainbow facet in the Sperner
  labelling of $K$, as claimed.
\end{proof}

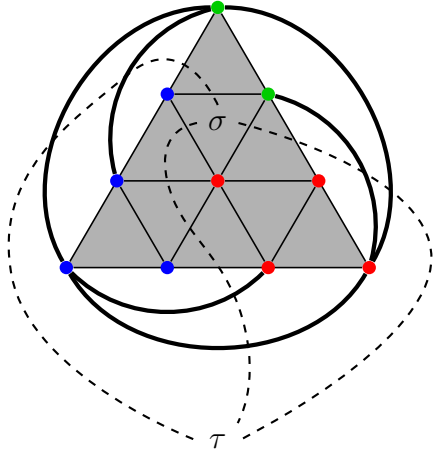
\begin{figure}
  \centering
  \begin{tikzpicture}[scale=2.3,every edge/.append style={semithick}]
    \foreach \i in {1,2,3}
    {
      \coordinate (x\i) at (120*\i-30:1);
    }
    \filldraw[draw=none,fill=black!30] (x1)--(x2)--(x3)--cycle;
    \node[vertex,fill=green!80!black] (v1) at (x1) {};
    \node[vertex,fill=blue] (v2) at (x2) {};
    \node[vertex,fill=red] (v3) at (x3) {};
    \draw (v1) edge (v2)
      (v2) edge (v3)
      (v3) edge (v1);
    \node[vertex,fill=red] (v0) at (0,0) {};
    \node[vertex,fill=blue] (v112) at (barycentric cs:v1=2/3,v2=1/3) {};
    \node[vertex,fill=blue] (v122) at (barycentric cs:v1=1/3,v2=2/3) {};
    \node[vertex,fill=green!80!black] (v113) at (barycentric cs:v1=2/3,v3=1/3)
      {};
    \node[vertex,fill=red] (v133) at (barycentric cs:v1=1/3,v3=2/3) {};
    \node[vertex,fill=blue] (v223) at (barycentric cs:v2=2/3,v3=1/3) {};
    \node[vertex,fill=red] (v233) at (barycentric cs:v2=1/3,v3=2/3) {};
    \draw (v112) edge (v113)
      (v133) edge (v233)
      (v122) edge (v223)
      (v0) edge (v112)
      (v0) edge (v122)
      (v0) edge (v223)
      (v0) edge (v233)
      (v0) edge (v113)
      (v0) edge (v133);
    \draw (v1) edge[ultra thick,bend right=60] (v2)
      (v1) edge[ultra thick,bend right=45] (v122)
      (v2) edge[ultra thick,bend right=60] (v3)
      (v2) edge[ultra thick,bend right=45] (v233)
      (v3) edge[ultra thick,bend right=60] (v1)
      (v3) edge[ultra thick,bend right=45] (v113);
    \node[minimum size=4pt,inner sep=4pt] (s) at (barycentric
      cs:v112=1/3,v113=1/3,v0=1/3) {$\sigma$};
    \node[minimum size=4pt,inner sep=4pt] (t) at (0,-1.5) {$\tau$};
    \draw[dashed,thick] plot [smooth,tension=1] coordinates {(s.west)
      (barycentric cs:v122=1/2,v0=1/2) (280:0.8) (t.north east)};
    \draw[dashed,thick] plot [smooth,tension=1] coordinates {(s.north)
      (130:0.8) (205:1.3) (t.west)};
    \draw[dashed,thick] plot [smooth,tension=1] coordinates {(s.east) (340:1.3)
      (t.east)};
  \end{tikzpicture}
  \caption{A triangulation $K$ of $\Delta^2$ (in gray) extended to a
    triangulation $K'$ of $S^2$ (thick edges and white faces; the outer face is
    labelled $\tau$). Three internally vertex-disjoint paths (dashed) in the dual
    graph partition the vertices into three subsets (red, green and blue).}
  \label{fig:2d}
\end{figure}

In this paper, we will show that the answer to Problem~\ref{prob:unique} (and
consequently, to Problem~\ref{prob:gallai}) is
negative in dimension $3$ and higher. Namely, we will prove the following
theorem in Section~\ref{sec:main}.

\begin{theorem}\label{thm:main}
  For every integer $d \geq 3$, there exist infinitely many triangulations $K$
  of $\Delta^d$ such that there exists a facet $\sigma \in K$ with the property
  that every Sperner labelling of $K$ has a rainbow facet distinct from
  $\sigma$.
\end{theorem}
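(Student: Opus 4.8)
The plan is to prove Theorem~\ref{thm:main} by exhibiting, for each $d \geq 3$, a triangulation $K$ of $\Delta^d$ and a distinguished facet $\sigma$ with an obstruction to making $\sigma$ the unique rainbow facet. The abstract tells us the construction is based on a specific $4$-polytope used to disprove Motzkin's claim on neighbourly polytopes, so I would start by recalling or constructing that polytope $P$ and extracting from it a simplicial complex with a topological feature that forces a second rainbow facet. The guiding principle is a parity/counting obstruction in the spirit of Sperner's lemma itself: rainbow facets in a labelled triangulation come in pairs except for those forced by the boundary, so if I can arrange a triangulation whose boundary behaviour forces a net odd count of rainbow facets of a certain ``orientation'' — but where $\sigma$ alone cannot absorb this parity — then a second rainbow facet must persist under every Sperner labelling.

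Concretely, first I would set up the combinatorial framework: a Sperner labelling of $K$ is equivalent to a simplicial map from $K$ to the boundary $\partial\Delta^d$ (equivalently to $S^{d-1}$) restricting to the identity on $\partial\Delta^d$, and rainbow facets correspond to facets mapped onto the whole of $\partial\Delta^d$, i.e.\ where the map has nonzero local degree. The total signed count of such facets is the degree of the map, which is a topological invariant fixed by the boundary data. The standard Sperner argument shows this degree is $\pm 1$, forcing at least one rainbow facet; to force \emph{two}, I want a triangulation in which the relevant labelling is constrained so that the rainbow facets cannot all coincide with a single prescribed $\sigma$. The role of the neighbourly $4$-polytope is presumably to supply a triangulation of $\Delta^3$ (and then, by taking suitable cones or joins, of $\Delta^d$ for $d \geq 3$) whose facet structure makes $\sigma$ ``isolated'' from the sign pattern needed to cancel a second rainbow simplex.

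The key steps, in order, would be: (1) Describe the $4$-polytope $P$ and identify the relevant sub-complex; derive from it a triangulation $K_3$ of $\Delta^3$ together with a facet $\sigma$. (2) Prove a lemma asserting that any Sperner labelling of $K_3$ restricts on some internal substructure (an induced cycle or sphere in the dual, mirroring the $d=2$ argument but now obstructed) in a way incompatible with uniqueness of $\sigma$ — this is where the special geometry of $P$ enters, ruling out the path-separation trick that worked in two dimensions. (3) Establish the infinite family by a subdivision or stacking operation (e.g.\ repeatedly coning over facets, or connected-summing copies), verifying that each operation preserves both the existence of the distinguished $\sigma$ and the forced-second-rainbow property, and that it produces genuinely distinct triangulations. (4) Lift from $d=3$ to all $d \geq 3$, most naturally by an iterated suspension or join with a simplex, checking that Sperner labellings and rainbow facets behave functorially under this operation so that the obstruction propagates upward.

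The main obstacle I expect is Step~(2): proving that \emph{every} Sperner labelling — not merely a generic or symmetric one — is forced to have a second rainbow facet. In the $d=2$ case (Theorem~\ref{thm:2d}) uniqueness was achievable precisely because planar triangulations are $3$-connected and one can route three vertex-disjoint dual paths separating the three colour classes; the heart of the matter for $d \geq 3$ is to show that the analogous separation is \emph{impossible} for the chosen $K$, and that this impossibility is rigid against all labellings rather than circumventable by a clever choice. I anticipate this requires translating the neighbourliness (or the specific non-realizability feature that refuted Motzkin) into a precise statement about the impossibility of partitioning $V(K)$ into $d+1$ colour classes with the required adjacency/separation pattern while keeping $\sigma$ the sole fully multicoloured facet. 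Pinning down exactly which property of $P$ supplies this rigidity, and verifying it survives the cone/suspension operations of Steps~(3)--(4), is where the real work lies.
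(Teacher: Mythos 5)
Your overall skeleton matches the paper's: build a triangulation of $\Delta^3$ from the special $4$-polytope (it is the polytope $H_8$ from Gr\"unbaum, Section~7.2), pass to general $d\geq 3$ by joining with a $(d-4)$-simplex, and obtain infinitely many examples by subdividing a facet other than $\sigma$ and invoking Sperner's lemma inside the subdivided part. However, the step you yourself flag as ``where the real work lies''---your Step~(2)---is precisely the step you have not supplied, and the route you sketch for it (a signed/degree parity count, or an impossibility of dual-path separation echoing the $d=2$ argument) is not the mechanism that works. The degree of the induced map to $\partial\Delta^d$ is $\pm 1$ for \emph{every} Sperner labelling of \emph{every} triangulation, so it cannot by itself distinguish the constructed triangulation from one where $\sigma$ can be made the unique rainbow facet; and the paper's argument uses no orientations, no signs beyond invoking Sperner's lemma as a black box, and no separation property of the dual graph.

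What actually happens is this. The boundary complex $\partial H_8$ contains two vertex-disjoint facets $\sigma_0=ABGZ$ and $\sigma_1=CDEF$, and the triangulation $K$ of $\Delta^3$ is obtained by \emph{deleting} $\sigma_0$, so that $A,B,G,Z$ become the vertices of the ambient simplex and the Sperner condition forces them to carry all four labels. The distinguished facet is $\sigma_1$ (joined with $\rho$ in higher dimensions). The heart of the proof is Lemma~\ref{lem:h8}, a purely finite case analysis on the $20$ facets of $H_8$: no labelling of its $8$ vertices by $\{1,2,3,4\}$ makes both $\sigma_0$ and $\sigma_1$ rainbow without making some third facet rainbow. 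The theorem then follows from a two-case split: if $\sigma_1\star\rho$ is not rainbow, ordinary Sperner's lemma already yields a rainbow facet distinct from it; if it is rainbow, then (since a Sperner labelling of $K\star\rho$ forces $\rho$ to carry exactly the labels complementary to those on $\{A,B,G,Z\}$) the restriction to $K$ is a $4$-labelling satisfying the hypotheses of the lemma, which yields a third rainbow facet. Without identifying the pair $(\sigma_0,\sigma_1)$, realising that deleting $\sigma_0$ is what makes its vertex set automatically rainbow, and proving the finite lemma, your proposal remains a plan with its central ingredient missing; the ``neighbourliness'' and the refutation of Motzkin's claim play no direct logical role beyond having motivated the choice of $H_8$.
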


\section{Terminology and preliminary results}
\label{sec:terminology}

For topological and graph-theoretic concepts not defined here, the reader is
referred to Munkres~\cite{Mun84} and to Bondy and Murty~\cite{BM08},
respectively. An introduction to Sperner's lemma---and combinatorial topology
in general---can be found in de Longueville~\cite{dL13}. The book by Mohar
and Thomassen~\cite{MT01} provides an introduction to topological graph theory.
For polytope theory, we refer the reader to Gr\" unbaum~\cite{Gru03} and to
Ziegler~\cite{Zie95}.

A $k$-colouring of a graph $G$ is a function $c:V(G) \to \{1, \ldots, k\}$ such
that $c(u) \neq c(v)$ whenever $uv \in E(G)$. The minimum $k$ for which a
$k$-colouring of $G$ exists is known as the \emph{chromatic number} of $G$,
denoted by $\chi(G)$. If $\chi(G) \leq k$ or $\chi(G)=k$, then $G$ is said to
be \emph{$k$-colourable} or \emph{$k$-chromatic}, respectively. Furthermore, a
graph $G$ is said to be \emph{$k$-critical} (resp.\ \emph{$k$-vertex-critical})
if $G$ is $k$-chromatic and deleting any edge (resp.\ vertex) results in a
$(k-1)$-colourable graph. A graph is said to be \emph{$k$-vertex-connected} if
it has at least $k+1$ vertices, and removing any set of $k-1$ vertices does not
disconnect the graph.

A (geometric) \emph{simplicial complex} $K$ is a collection of geometric
simplices of various dimensions satisfying two conditions: firstly, if $\sigma
\in K$ and $\tau$ is a face of $\sigma$, then $\tau \in K$; secondly, if
$\sigma, \tau \in K$ and $\sigma \cap \tau \neq \emptyset$, then $\sigma \cap
\tau$ is a face of both $\sigma$ and $\tau$. We denote the regular
$d$-dimensional simplex, unit $d$-ball and unit $d$-sphere by $\Delta^d$, $B^d$
and $S^d$, respectively. Simplices of $K$ that are not strictly contained in
other simplices are the \emph{facets} of $K$. The union of all simplices of
$K$, known as the \emph{polyhedron} of $K$, is denoted by $|K|$. The
\emph{$k$-skeleton} of a simplicial complex $K$, denoted $K^{(k)}$, consists of
all the simplices of $K$ of dimension at most $k$; note that $K^{(1)}$ consists
of vertices and edges, i.e., it is a graph. If two topological spaces $X$ and
$Y$ are homeomorphic, we write $X \cong Y$. A simplicial complex $K$ such that
$X \cong |K|$, if one exists, is called a \emph{triangulation} of $X$. The
\emph{join} of two simplicial complexes $K$ and $L$ is denoted by $K \star L$.

A \emph{quadrangulation} of a surface (i.e., a $2$-dimensional manifold) $X$ is
a graph embedded in $X$ so that every face of the embedding is bounded by a
$4$-cycle. In~\cite{KS15}, the definition of quadrangulations was extended to
triangulable topological spaces in the following way. A graph $G$ is a
\emph{quadrangulation} of a topological space $X$ if $G$ is a spanning subgraph
of the $1$-skeleton $K^{(1)}$ of some triangulation $K$ of $X$, with the
property that the vertices of every facet of $K$ induce a complete bipartite
subgraph of $G$ with at least one edge. This definition coincides with the
usual definition in the case when $X$ is a $2$-dimensional manifold.
In~\cite[Lemma~3.2]{KS15}, the following characterisation was given of
non-bipartite quadrangulations of the $d$-dimensional real projective space
$\RP^d$:

\begin{lemma}\label{lem:quad}
  A graph $G$ is a non-bipartite quadrangulation of $\RP^d$ if and only if
  there exists a centrally symmetric triangulation $K$ of $S^d$ and a
  $2$-colouring of the vertices with colours black and white, such that no pair
  of antipodal vertices is adjacent, antipodal vertices receive different
  colours, every facet of $K$ contains a black and a white vertex, and $G$ is
  obtained from the $1$-skeleton $K^{(1)}$ by first deleting all monochromatic
  edges, and then identifying antipodal vertices and edges.
\end{lemma}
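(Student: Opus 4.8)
The plan is to exploit the fact that $S^d$ is the (connected) double cover of $\RP^d$, with deck transformation the antipodal map $a$, and to translate the property of being a non-bipartite quadrangulation into the existence of an antipodally colour-reversing $2$-colouring on the cover. For the forward direction, suppose $G$ is a non-bipartite quadrangulation of $\RP^d$, witnessed by a triangulation $K_0$ of $\RP^d$ with $G$ a spanning subgraph of $K_0^{(1)}$ in which the vertices of every facet induce a complete bipartite graph with at least one edge. First I would record that each facet $\sigma$ of $K_0$ carries a bipartition $\{A_\sigma,B_\sigma\}$ of its vertex set, read off from the complete bipartite structure: two vertices lie on the same side precisely when they are \emph{non}-adjacent in $G$. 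Because $G$-adjacency is intrinsic, these local bipartitions agree on shared faces, so together they define a $\mathbb Z/2$-valued $1$-cocycle $\gamma$ on $K_0$ (crossing a $G$-edge flips the value, crossing a deleted monochromatic edge preserves it). A global $2$-colouring of $V(K_0)$ inducing all the $\{A_\sigma,B_\sigma\}$ exists exactly when $\gamma$ is a coboundary, which is the same as $G$ being bipartite. Hence non-bipartiteness of $G$ means $[\gamma]$ is the non-trivial class in $H^1(\RP^d;\mathbb Z/2)\cong\mathbb Z/2$.

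Next I would pull everything back along the covering map $\pi\colon S^d\to\RP^d$. Let $K$ be the centrally symmetric triangulation of $S^d$ lifting $K_0$; since $\pi$ restricts to a homeomorphism on each simplex, no simplex of $K$ contains an antipodal pair, so no antipodal vertices are adjacent. The pulled-back cocycle $\pi^*\gamma$ is a coboundary because $\pi^*\colon H^1(\RP^d;\mathbb Z/2)\to H^1(S^d;\mathbb Z/2)$ kills the generator (for $d\ge 2$ the target vanishes; for $d=1$ the induced map is multiplication by the degree $2\equiv 0$). Thus $\pi^*\gamma=\delta c$ for a genuine $2$-colouring $c\colon V(K)\to\{\text{black},\text{white}\}$ realising every lifted local bipartition. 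Since $[\gamma]$ was non-trivial, $c$ cannot descend to $\RP^d$, which forces $c$ to be reversed by $a$, i.e. antipodal vertices receive different colours. Every facet of $K$ contains both colours because its projection is a facet of $K_0$ with both bipartition classes non-empty, and the bichromatic edges of $K$ are by construction exactly the lifts of the edges of $G$; identifying antipodes therefore returns $G$, establishing all the asserted properties.

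For the converse, given $K$ and the colouring, I would form the quotient $K_0=K/a$. The condition that no antipodal vertices are adjacent guarantees $\sigma\cap a(\sigma)=\emptyset$ for every simplex $\sigma$, so $a$ acts freely and simplicially and $K_0$ is a genuine triangulation of $\RP^d=S^d/a$. Define $G$ as the image under $\pi$ of the bichromatic edges of $K$; since the two $\pi$-preimages of any edge of $K_0$ are antipodal and antipodal edges are simultaneously bichromatic or monochromatic, $G$ is a well-defined spanning subgraph of $K_0^{(1)}$. Each facet of $K_0$ lifts to a facet $\sigma$ of $K$ on which $\pi$ is injective; the bichromatic edges inside the simplex $\sigma$ join its two non-empty colour classes and hence form a complete bipartite graph with at least one edge, which projects isomorphically into $G$. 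Thus $G$ is a quadrangulation of $\RP^d$.

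It remains to see that $G$ is non-bipartite, and here lies the main point. The subgraph of bichromatic edges of $K$ is connected: within each facet it spans all vertices (being a connected complete bipartite graph), and consecutive facets of the pseudomanifold $K$ share a codimension-one face and hence share vertices. Choosing any vertex $v$ and a bichromatic path to $a(v)$, the fact that $c(a(v))\neq c(v)$ forces the path to have odd length, and its image under $\pi$ is a closed odd walk in $G$. I expect the genuinely delicate step to be the forward direction: checking that the locally defined bipartitions glue to a well-defined cocycle and, above all, pinning down that non-bipartiteness corresponds \emph{exactly} to the non-trivial class of $H^1(\RP^d;\mathbb Z/2)$ whose associated double cover is the antipodal cover $S^d\to\RP^d$ — equivalently, that colour-reversal under the deck transformation is both necessary and sufficient. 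Once this dictionary between non-bipartiteness and antipodal colour-reversal is in place, all remaining verifications are routine.
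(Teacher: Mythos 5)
The paper itself contains no proof of this lemma---it is imported verbatim from \cite[Lemma~3.2]{KS15}---so your proposal can only be judged on its own merits. Your forward direction is essentially sound: the indicator cochain $\gamma$ of $E(G)$ is a $\mathbb{Z}/2$-cocycle because every triangle of $K_0$ lies in a facet and therefore contains $0$ or $2$ edges of $G$; exactness of $\gamma$ is equivalent to bipartiteness of $G$ (here you should say explicitly that a complete bipartite graph with an edge is \emph{connected}, so any proper $2$-colouring of $G$ restricts on each facet to its unique bipartition); and the pullback to $S^d$ is exact. The one step to tighten is ``cannot descend, which forces $c$ to be reversed by $a$'': from $\delta(c\circ a)=a^*\pi^*\gamma=\pi^*\gamma=\delta c$ you should deduce that $c\circ a+c$ is a degree-$0$ cocycle, hence constant on the connected $S^d$, and then rule out the constant $0$; non-descent by itself only says $c\circ a$ and $c$ differ \emph{somewhere}, not everywhere.

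The genuine gap is in the converse---exactly the part you declare routine. It is false that freeness of the involution (equivalently $\sigma\cap a(\sigma)=\emptyset$ for all $\sigma$) makes $K/a$ a simplicial complex; for that one needs a regularity condition, which in general only holds after barycentric subdivision. Concretely, let $K$ be the suspension of a $6$-cycle (hexagonal bipyramid), with apexes $n,s$ and equatorial vertices $v_1,\dots,v_6$, antipodal map $n\leftrightarrow s$, $v_i\leftrightarrow v_{i+3}$, and colouring $n,v_1,v_3,v_5$ black, $s,v_2,v_4,v_6$ white. Every hypothesis of the lemma holds: $K$ is a centrally symmetric triangulation of $S^2$, no antipodal pair is adjacent, the colouring is antisymmetric, and every facet $nv_iv_{i+1}$, $sv_iv_{i+1}$ contains both colours. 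Yet the facets $nv_1v_2$ and $nv_4v_5$ are distinct and non-antipodal but project to the same vertex set $\{[n],[v_1],[v_2]\}$, so $K/a$ is \emph{not} a simplicial complex. Worse, the quotient graph is $G=K_4$: the edge $[n][v_1]$ arises from the bichromatic edge $nv_4$ of $K$, even though $n$ and $v_1$ lie on the \emph{same} side of the bipartition of the facet $nv_1v_2$; so the projected facet induces a triangle in $G$, not a complete bipartite graph. And since every simplicial triangulation of $\RP^2$ has at least six vertices, $K_4$ is not a spanning subgraph of the $1$-skeleton of \emph{any} triangulation of $\RP^2$ in the paper's sense, so no local repair of your quotient argument can succeed: with the paper's literal definition of triangulation, the implication you are proving fails for this $K$, and the fix must come from stronger hypotheses (e.g.\ requiring antipodal vertices to be at combinatorial distance at least $3$, which forces the action to be regular and kills the spurious induced edges) or from the more careful framework of \cite{KS15}, which this paper does not reproduce. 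Your closing self-assessment is therefore inverted: the cohomological forward direction is the solid half, while the ``routine'' quotient step is where the real delicacy of the statement lives.
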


If $P$ is a (convex) $d$-polytope, its $(d-1)$-dimensional faces are its
\emph{facets}. If every facet is a simplex, then the \emph{boundary} of $P$,
denoted by $\partial P$, is a simplicial complex such that $|\partial P| \cong
S^{d-1}$, i.e., $\partial P$ is a triangulation of $S^{d-1}$.  The convex hull
of the vectors of the standard orthonormal basis and their negatives in $\RR^d$
is known as the the $d$-dimensional \emph{cross polytope}.

\section{Relation to chromatic number}
\label{sec:chromatic}

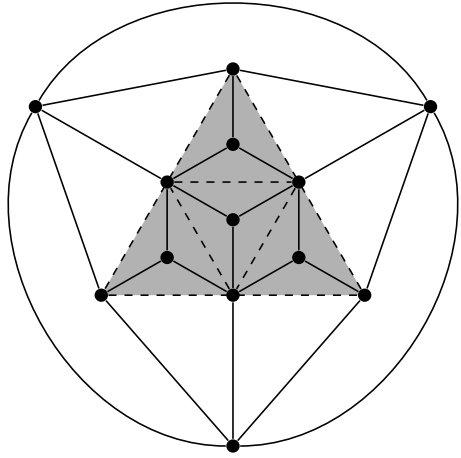
\begin{figure}
  \centering
  \begin{tikzpicture}[scale=2,every edge/.append style={semithick}]
    \foreach \i in {1,2,3}
    {
      \coordinate (x\i) at (120*\i-30:1);
    }
    \filldraw[draw=none,fill=black!30] (x1)--(x2)--(x3)--cycle;
    \foreach \i in {1,2,3}
    {
      \node[vertex] (v\i) at (x\i) {};
    }
    \node[vertex] (v12) at (barycentric cs:v1=1/2,v2=1/2) {};
    \node[vertex] (v13) at (barycentric cs:v1=1/2,v3=1/2) {};
    \node[vertex] (v23) at (barycentric cs:v2=1/2,v3=1/2) {};
    \node[vertex] (u0) at (0,0) {};
    \node[vertex] (u1) at (barycentric cs:v1=1/3,v12=1/3,v13=1/3) {};
    \node[vertex] (u2) at (barycentric cs:v2=1/3,v12=1/3,v23=1/3) {};
    \node[vertex] (u3) at (barycentric cs:v3=1/3,v13=1/3,v23=1/3) {};
    \foreach \i in {1,2,3}
    {
      \node[vertex] (w\i) at (120*\i+150:1.5) {};
    }
    \draw[dashed] (v1) edge (v2)
      (v2) edge (v3)
      (v3) edge (v1);
    \draw[dashed] (v12) edge (v13)
      (v13) edge (v23)
      (v23) edge (v12);
    \draw (w1) edge[bend right=60] (w2) 
      (w2) edge[bend right=60] (w3)
      (w3) edge[bend right=60] (w1);
    \draw (w1) edge (v2)
      (w1) edge (v3)
      (w1) edge (v23);    
    \draw (w2) edge (v1)
      (w2) edge (v3)
      (w2) edge (v13);
    \draw (w3) edge (v1)
      (w3) edge (v2)
      (w3) edge (v12);
    \draw (u0) edge (v12)
      (u0) edge (v13)
      (u0) edge (v23);
    \draw (u1) edge (v1)
      (u1) edge (v12)
      (u1) edge (v13);
    \draw (u2) edge (v2)
      (u2) edge (v12)
      (u2) edge (v23);
    \draw (u3) edge (v3)
      (u3) edge (v13)
      (u3) edge (v23);
  \end{tikzpicture}
  \caption{A triangulation $K$ (dashed edges) of a $2$-simplex (in gray), and
    the graph $G_K$ (solid edges).}
  \label{fig:gallai}
\end{figure}

Gallai used Sperner's lemma to construct a family of $4$-chromatic planar
graphs; see~\cite[Problem~9.14]{JT95} or~\cite{NT75}. Let $K$ be a
triangulation of $\Delta^d$, and define the graph $G_K$ as follows (see
Figure~\ref{fig:gallai} for an illustration). There are three types of vertices
of $G_K$:
\begin{itemize}
  \item[(V1)] The vertices $u_1, \ldots, u_n$ of $K$;
  \item[(V2)] A vertex $v_{\rho}$ for every $d$-simplex $\rho \in K$;
  \item[(V3)] A vertex $w_{\sigma}$ for every $(d-1)$-simplex $\sigma \in
    \Delta^d$.
\end{itemize}
The edge set of $G_K$ is defined as $E(G_K)=E_1 \cup E_2 \cup E_3$, where:
\begin{itemize}
  \item[(E1)] $u_iv_{\rho} \in E_1$ if $u_i \in \rho$, where
    $u_i$ is a vertex of $K$ and $\rho$ is a $d$-simplex of $K$;
  \item[(E2)] $u_iw_{\sigma} \in E_2$ if $u_i$ belongs to the
    facet $\sigma$ of $\Delta^d$;
  \item[(E3)] $w_{\sigma}w_{\tau} \in E_3$ for any pair of distinct $(d-1)$-simplices
    $\sigma, \tau \in \Delta^d$.
\end{itemize}

Note that the vertices of type (V1) and (V2) form independent sets in
$G_K$, the vertices of type (V3) induce a $(d+1)$-clique in $G_K$, and there are no
edges between the vertices of type (V2) and of type (V3). It follows that
$G_K$ is $(d+2)$-colourable---for example, by colouring the clique induced
by (V3) using colours $1,\ldots,d+1$, the vertices of type (V2) by colour $1$,
and the vertices of type (V1) by colour $d+2$.

What is more interesting is that $G_K$ is not $(d+1)$-colourable (and therefore
$G_K$ is $(d+2)$-chromatic). Indeed,
Gallai observed that the statement that $G_K$ is not $(d+1)$-colourable is
equivalent to Sperner's lemma; we formalise it as the following proposition.

\begin{proposition}\label{prop:equiv}
  $G_K$ is not $(d+1)$-colourable if and only if every Sperner labelling of $K$
  has a rainbow facet.
\end{proposition}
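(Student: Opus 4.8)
The plan is to establish the biconditional by constructing an explicit correspondence between $(d+1)$-colourings of $G_K$ and Sperner labellings of $K$, so that a colouring survives precisely when the associated labelling avoids rainbow facets. First I would analyse what any proper $(d+1)$-colouring $c$ of $G_K$ must look like. Since the type-(V3) vertices $w_\sigma$ form a $(d+1)$-clique, they use all $d+1$ colours; after relabelling the colour classes we may assume each $w_{\sigma_i}$ receives a distinct colour, where $\sigma_1,\dots,\sigma_{d+1}$ are the facets of $\Delta^d$. The key observation is that the (E2) edges then force the labelling: a vertex $u_i$ of $K$ lying on facet $\sigma_j$ cannot take the colour of $w_{\sigma_j}$, and iterating this over all facets containing $u_i$ pins down its colour exactly as a Sperner labelling would. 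In particular the corner vertices $v_1,\dots,v_{d+1}$ of $\Delta^d$ are forced to their prescribed labels, and every boundary vertex is forced to a label appearing among the corners of the face it lies in. Thus restricting $c$ to the type-(V1) vertices yields a Sperner labelling $\lambda$ of $K$.

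Next I would track what the remaining colour constraints say. The type-(V2) vertices $v_\rho$ are only constrained by the (E1) edges to $v_\rho$'s own vertices, so $v_\rho$ can be properly coloured if and only if the $d+1$ vertices of the $d$-simplex $\rho$ do not use up all $d+1$ colours under $c$—that is, if and only if $\rho$ is \emph{not} a rainbow facet under $\lambda$. Hence a proper $(d+1)$-colouring of $G_K$ extending a given Sperner labelling exists exactly when that labelling has no rainbow facet. Conversely, starting from a Sperner labelling $\lambda$ with no rainbow facet, I would build a colouring: colour each $u_i$ by $\lambda(u_i)$, colour each $w_{\sigma_j}$ by the label of the corner of $\Delta^d$ opposite $\sigma_j$ (giving all $d+1$ colours on the clique), and colour each $v_\rho$ by any colour missing from $\rho$, which exists precisely because $\rho$ is not rainbow. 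I would verify each edge type (E1)–(E3) is properly coloured, using the Sperner property for (E2) and the opposite-corner choice for (E3).

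Assembling the two directions gives the equivalence in contrapositive form: $G_K$ \emph{is} $(d+1)$-colourable if and only if some Sperner labelling of $K$ has no rainbow facet, which is the negation of the claimed statement. I expect the main obstacle to be the careful bookkeeping in the forward direction—showing rigorously that the (E2) constraints together with the fixed colours on the clique force the restriction of $c$ to be a genuine Sperner labelling, in particular that the corner vertices $v_i$ are forced to label $i$ and that boundary vertices inherit an admissible label. This requires checking that for a vertex $u_i$ lying in a $k$-dimensional face of $\Delta^d$, the $d+1-k$ facets containing that face exclude exactly the $d+1-k$ "wrong" colours via (E2) edges, leaving precisely the $k+1$ corner labels of the face as permissible—matching the Sperner condition. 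The (E1) and (E3) checks, and the construction in the reverse direction, should then be routine.
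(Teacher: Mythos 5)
Your proposal is correct and follows essentially the same route as the paper: the type-(V3) clique forces the colouring restricted to the vertices of $K$ to be (up to a permutation of colours) a Sperner labelling, and the type-(V2) vertices are properly colourable precisely when no facet is rainbow. The only quibble is a harmless miscount near the end: a $k$-dimensional face of $\Delta^d$ is contained in $d-k$ facets of $\Delta^d$, not $d+1-k$, which is consistent with your (correct) conclusion that exactly the $k+1$ corner labels remain permissible.
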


\begin{proof}
  Suppose for a contradiction that there exists a Sperner labelling $\lambda$
  of $K$ with no rainbow facet. Then we can define a $(d+1)$-colouring $c$ of
  $G_K$ as follows. For each vertex $u_i$ of type (V1), let
  $c(u_i)=\lambda(u_i)$. For each vertex $w_{\sigma}$ of type (V3), let $u_j$
  be the unique vertex of $\Delta^d$ not belonging to $\sigma$, and set
  $c(w_{\sigma})=\lambda(u_j)$. Finally, each vertex $v_{\rho}$ of type (V2) is
  adjacent to the $d+1$ vertices of a facet $\rho \in K$, which is not rainbow
  by assumption. Hence, the colouring $c$ can be extended to a
  $(d+1)$-colouring of $G_K$.

  Conversely, suppose that there is a $(d+1)$-colouring of $G_K$. The vertices
  of $G_K$ of type (V3) ensure that the labelling of $K$ induced by the
  colouring is a Sperner labelling, and there is no rainbow facet in $K$ since
  the vertices of every facet in $K$ are adjacent to a vertex of type (V2) in
  $G_K$.
\end{proof}

Observe that if $K$ is a triangulation of the $2$-dimensional simplex, then
$G_K$ is a $4$-chromatic planar graph with exactly $4$ triangles. This is best
possible in view of the sharpening of Grötzsch's theorem due to
Grünbaum~\cite{Gru63} and Aksenov~\cite{Aks74}, which states that every planar
graph with at most $3$ triangles is $3$-colourable. Indeed, Borodin et
al.~\cite{BDKLY14} characterised the $4$-chromatic planar graphs with exactly
$4$ triangles; in this characterisation, the graphs $G_K$ belong among those
graphs that can be obtained from the complete graph on $4$ vertices by
replacing a single vertex of degree $3$ by a `critical' patch (in a sense made
precise in~\cite{BDKLY14}).

It is easy to see that for any triangulation $K$ of the $1$-simplex, the graph
$G_K$ is an odd cycle and thus $3$-critical. Gallai~\cite{JT95,NT75} proved
that $G_K$ is $4$-critical whenever $K$ is a triangulation of the $2$-simplex;
we note that this also follows from Theorem~\ref{thm:projective} in conjunction
with~\cite[Theorem~5.4]{GT97} (see the discussion at the end of
Section~\ref{sec:quadrangulation}).

In 1969 Gallai asked the following question, presented as Problem~9.14 by
Jensen and Toft~\cite{JT95}. The question is also mentioned in~\cite{NT75,RT85}.

\begin{problem}\label{prob:gallai}
  If $d \geq 3$, and $K$ is a triangulation of $\Delta^d$, is the graph $G_K$
  $(d+2)$-critical?
\end{problem}

The following observation shows that a positive answer to Problem~\ref{prob:gallai}
would imply a positive answer to Problem~\ref{prob:unique}.

\begin{observation}\label{obs:critical}
  Let $K$ be a triangulation of $\Delta^d$. If $G_K$ is $(d+2)$-critical,
  then for any facet $\sigma \in K$, there exists a Sperner labelling of
  $K$ where $\sigma$ is the unique rainbow facet.
\end{observation}

\begin{proof}
  Suppose $G_K$ is $(d+2)$-critical, for some triangulation $K$ of $\Delta^d$.
  Then for any facet $\sigma \in K$, the graph $G_K-v_{\sigma}$ is
  $(d+1)$-colourable. Hence, the labelling of $K$ induced by the
  $(d+1)$-colouring of $G-v_{\sigma}$ is a Sperner labelling with no rainbow
  facet distinct from $\sigma$.
\end{proof}

It is easy to see that Observation~\ref{obs:critical}, in conjunction with
Theorem~\ref{thm:main}, immediately settles Gallai's question in the negative.

\begin{theorem}\label{thm:gallai-counterexample}
  For every $d \geq 3$, there exist infinitely many triangulations $K$ of
  $\Delta^d$ such that $G_K$ is not $(d+2)$-critical.
\end{theorem}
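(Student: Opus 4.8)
The plan is to derive this as a purely formal consequence of Theorem~\ref{thm:main} and Proposition~\ref{prop:critical}; all the genuine content (the construction and its verification) lives in Theorem~\ref{thm:main}, so at this stage there is essentially no obstacle to overcome.

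First I would fix an integer $d \geq 3$ and apply Theorem~\ref{thm:main} to obtain an infinite family of triangulations $K$ of $\Delta^d$, each equipped with a distinguished facet $\sigma = \sigma(K)$ such that every Sperner labelling of $K$ has a rainbow facet distinct from $\sigma$. The point to extract is that, for this particular facet $\sigma$, there is \emph{no} Sperner labelling of $K$ in which $\sigma$ is the unique rainbow facet: any Sperner labelling is forced to produce a second rainbow facet somewhere.

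Next I would invoke Proposition~\ref{prop:critical} in its contrapositive form. That proposition characterises $(d+2)$-criticality of $G_K$ by the condition that \emph{for every} facet of $K$ there exists a Sperner labelling making that facet the unique rainbow facet. Since the facet $\sigma$ furnished above admits no such labelling, this condition fails for $K$, and hence $G_K$ is not $(d+2)$-critical. (Recall that $G_K$ is nonetheless always $(d+2)$-chromatic, by Proposition~\ref{prop:equiv} together with Sperner's lemma, so it is a $(d+2)$-chromatic graph that just fails to be critical.) Letting $K$ range over the infinite family supplied by Theorem~\ref{thm:main} yields infinitely many such triangulations, and running the argument for each $d \geq 3$ completes the proof. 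The only real difficulty in the whole chain is proving Theorem~\ref{thm:main} itself; the deduction here is immediate once that theorem and Proposition~\ref{prop:critical} are in hand.
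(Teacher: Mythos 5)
Your proposal is correct and follows exactly the paper's route: the theorem is stated in the paper as an immediate consequence of Theorem~\ref{thm:main} together with Proposition~\ref{prop:critical}, which is precisely the deduction you spell out. No gaps; the contrapositive application of Proposition~\ref{prop:critical} to the distinguished facet $\sigma$ is all that is needed.
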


\section{$G_K$ as projective quadrangulations}
\label{sec:quadrangulation}

Jensen and Toft~\cite[Problem~9.14]{JT95} also raised the question of whether
the graphs $G_K$ belong to a `larger class of $(d+2)$-chromatic graphs defined
in purely graph-theoretic terms'. While we cannot do away with the topology, we
can show that the graphs $G_K$ belong to a larger class of graphs whose
chromatic number is large for `topological reasons', namely the class of
quadrangulations of projective spaces. Indeed, a result by the first two
authors~\cite[Theorem~1.1]{KS15}, generalising an earlier result of
Youngs~\cite{You96}, shows that all non-bipartite quadrangulations of the
$d$-dimensional projective space $\mathbb R\textup P^d$ are at least
$(d+2)$-chromatic.  

\begin{theorem}\label{thm:projective}
  If $K$ is a triangulation of the $d$-simplex, then $G_K$ is a non-bipartite
  quadrangulation of the $d$-dimensional projective space $\RP^d$.
\end{theorem}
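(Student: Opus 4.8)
The plan is to build an explicit centrally symmetric triangulation $\hat K$ of $S^d$ from the triangulation $K$ of $\Delta^d$ and then invoke Lemma~\ref{lem:quad}. The natural candidate for the double cover is the join of $\partial(\text{cross polytope})$-type structure with $K$, so first I would set up the following. Take two antipodal copies of $\Delta^d$ glued along their boundary in a way that produces $S^d$: concretely, one can realise $S^d$ as the boundary of the $(d+1)$-dimensional bipyramid over $\Delta^d$, or more symmetrically, identify $\RP^d$ with the quotient of $S^d$ and arrange for the vertices of type (V1), (V2), (V3) of $G_K$ to arise as the antipodal identifications of paired vertices of $\hat K$. The cleanest route is to describe $\hat K$ as a centrally symmetric triangulation whose vertex set is $\{\pm u_i\}\cup\{\pm v_\rho\}\cup\{\pm w_\sigma\}$, where the positive copies correspond to the three vertex types of $G_K$ and the negative copies are their antipodes.

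Once $\hat K$ is in hand, I would verify the list of conditions in Lemma~\ref{lem:quad} one at a time. The $2$-colouring with black and white is forced: colour each positive-copy vertex black and each negative-copy vertex white (so antipodal vertices get different colours and no monochromatic pair is antipodal by construction). The key structural requirements are then (a) that $\hat K$ is genuinely a triangulation of $S^d$, i.e.\ the simplices fit together correctly and $|\hat K|\cong S^d$; (b) that it is centrally symmetric with no antipodal pair adjacent; and (c) that every facet of $\hat K$ contains both a black and a white vertex, equivalently that the identified graph $G$ is exactly $G_K$ and that the complete-bipartite-with-at-least-one-edge condition holds on each facet. The facets of $\hat K$ should be the joins of a simplex $\rho\in K$ (on its (V1) vertices), the apex $v_\rho$, and a selection of the $w_\sigma$'s, intertwined with their antipodes; checking that deleting monochromatic edges and identifying antipodes recovers precisely the edge set $E_1\cup E_2\cup E_3$ of $G_K$ is the heart of the verification.

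The main obstacle I anticipate is constructing $\hat K$ so that it is simultaneously a valid simplicial complex triangulating $S^d$ \emph{and} yields $G_K$ after the antipodal quotient, particularly reconciling the three different roles played by the vertex types. The vertices $w_\sigma$ of type (V3) form a clique and correspond to the facets of $\Delta^d$ itself, so their antipodal copies and the simplices spanned by them must encode the boundary structure of $\Delta^d$ correctly; getting the join structure to produce the $(d+1)$-clique on type (V3) while keeping the whole complex centrally symmetric and free of antipodal edges is delicate. A convenient way to manage this is to use the join operation $\star$ explicitly: realise the relevant subcomplex on $\{\pm w_\sigma\}$ as (a suitable triangulation of) the boundary of a cross polytope, whose quotient is the standard quadrangulation of $\RP^{d-1}$ lifting to the clique, and then join it appropriately with the part coming from $K$ and the apices $\pm v_\rho$. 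I would finish by confirming non-bipartiteness, which follows since $G_K$ is $(d+2)$-chromatic (hence certainly not $2$-colourable) by Proposition~\ref{prop:equiv} together with Sperner's lemma, Theorem~\ref{thm:sperner}.
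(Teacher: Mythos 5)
Your overall strategy is exactly the paper's: produce a centrally symmetric triangulation of $S^d$ equipped with an antisymmetric $2$-colouring in which no facet is monochromatic and no antipodal pair is adjacent, invoke Lemma~\ref{lem:quad}, and check that the resulting quotient graph is $G_K$. The problem is that your proposal stops at precisely the point where the proof has to begin: you never construct $\hat K$. You list the properties it must have (vertex set $\{\pm u_i\}\cup\{\pm v_\rho\}\cup\{\pm w_\sigma\}$, facets that are ``joins \ldots intertwined with their antipodes''), and you explicitly flag the construction as ``the main obstacle'' and ``delicate''---but that construction \emph{is} the content of the theorem. For comparison, the paper builds it as follows: take $P=\partial Q^{d+1}$, the boundary of the $(d+1)$-dimensional cross polytope, with one facet $\alpha_w$ coloured white and the opposite facet $\alpha_b$ black; glue the three pieces $P-\alpha_w$, $P-\alpha_w-\alpha_b$ and $P-\alpha_b$ end to end along $\partial\alpha_w$ and $\partial\alpha_b$ to obtain a centrally symmetric sphere $C$ with a unique all-white and a unique all-black facet; perform on those two facets the stellar moves that turn $\Delta^d$ into $K$ (new vertices inheriting the facet's colour); and finally stellarly subdivide each monochromatic facet by a vertex of the opposite colour, creating the (V2) vertices. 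None of this (nor any workable substitute) appears in your write-up.

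Moreover, the one concrete structural suggestion you do make points in a direction that cannot work as stated. The $d+1$ antipodal pairs $\pm w_\sigma$ span the boundary of a $(d+1)$-dimensional cross polytope, which is already a $d$-dimensional complex triangulating $S^d$ (and its antipodal quotient lives in $\RP^d$, not $\RP^{d-1}$); joining it with the part coming from $K$ and the apices $\pm v_\rho$ would exceed dimension $d$, so the pieces cannot be assembled by a join. In the paper the vertices of $K$ and the apices enter by \emph{subdividing} facets of the cross-polytope sphere, not by joining onto it, and the $(d+1)$-clique on the $w_\sigma$ arises from the gluing of three cross-polytope pieces rather than from a join factor. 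Your final remark that non-bipartiteness follows from $(d+2)$-chromaticity is harmless but unnecessary (Lemma~\ref{lem:quad} delivers non-bipartiteness once the coloured triangulation exists). As it stands, the proposal is a correct plan accompanied by an acknowledgement that the essential step is missing, so it does not constitute a proof.
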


\begin{figure}
  \centering
  \makeatletter
  \define@key{cylindricalkeys}{angle}{\def\myangle{#1}}
  \define@key{cylindricalkeys}{radius}{\def\myradius{#1}}
  \define@key{cylindricalkeys}{z}{\def\myz{#1}}
  \tikzdeclarecoordinatesystem{cylindrical}%
  {%
    \setkeys{cylindricalkeys}{#1}%
    \pgfpointadd{\pgfpointxyz{0}{0}{\myz}}{\pgfpointpolarxy{\myangle}{\myradius}}
  }
  \begin{tikzpicture}[scale=1.5,rotate=-45,z=10,line join=bevel]
  \def\distance{6}
  \foreach \i in {1,2,3}
  {
    \coordinate (u\i) at (cylindrical
      cs:angle=120*\i-120,radius=1,z=-\distance-0.75pt);
    \coordinate (u\i') at (cylindrical
      cs:angle=120*\i+60,radius=1,z=-\distance);
    \coordinate (v\i) at (cylindrical cs:angle=120*\i-120,radius=1,z=0.75);
    \coordinate (v\i') at (cylindrical cs:angle=120*\i+60,radius=1,z=-0.75);
    \coordinate (w\i) at (cylindrical
      cs:angle=120*\i+60,radius=1,z=\distance+0.75);
    \coordinate (w\i') at (cylindrical
      cs:angle=120*\i-120,radius=1,z=\distance);
  }
  \draw[fill opacity=0.9,fill=gray!20!black] (u2) -- (u3) -- (u1) -- cycle;
  \draw[fill opacity=0.9,fill=gray!20!black] (u2) -- (u3) -- (u1') -- cycle;
  \draw[fill opacity=0.9,fill=gray!20!black] (u2') -- (u3) -- (u1) -- cycle;
  \draw[fill opacity=0.9,fill=gray!20!black] (u2) -- (u3') -- (u1) -- cycle;
  \draw[fill opacity=0.9,fill=cyan!80!black] (u2') -- (u3) -- (u1') -- cycle;
  \draw[fill opacity=0.9,fill=cyan!80!black] (u2) -- (u3') -- (u1') -- cycle;
  \draw[fill opacity=0.9,fill=cyan!80!black] (u2') -- (u3') -- (u1) -- cycle;
  \draw[fill opacity=0.9,fill=gray!20!black] (v1') -- (v2) -- (v3') -- cycle;
  \draw[fill opacity=0.9,fill=gray!20!black] (v1) -- (v2) -- (v3') -- cycle;
  \draw[fill opacity=0.9,fill=gray!20!black] (v1) -- (v2') -- (v3') -- cycle;
  \draw[fill opacity=0.9,fill=cyan!80!black] (v1) -- (v2') -- (v3) -- cycle;
  \draw[fill opacity=0.9,fill=cyan!80!black] (v1') -- (v2') -- (v3) -- cycle;
  \draw[fill opacity=0.9,fill=cyan!80!black] (v1') -- (v2) -- (v3) -- cycle;
  \draw[fill opacity=0.9,fill=gray!20!black] (w1') -- (w2) -- (w3') -- cycle;
  \draw[fill opacity=0.9,fill=gray!20!black] (w1) -- (w2') -- (w3') -- cycle;
  \draw[fill opacity=0.9,fill=gray!20!black] (w1') -- (w2') -- (w3) -- cycle;
  \draw[fill opacity=0.9,fill=cyan!80!black] (w1) -- (w2) -- (w3') -- cycle;
  \draw[fill opacity=0.9,fill=cyan!80!black] (w1) -- (w2') -- (w3) -- cycle;
  \draw[fill opacity=0.9,fill=cyan!80!black] (w1') -- (w2) -- (w3) -- cycle;
  \draw[fill opacity=0.9,fill=cyan!80!black] (w1) -- (w2) -- (w3) -- cycle;
  \foreach \i in {1,...,3}
  {
    \fill (u\i) circle (1.5pt);
    \fill (v\i) circle (1.5pt);
    \fill (w\i') circle (1.5pt);
    \filldraw[color=black,fill=white] (u\i') circle (1.5pt);
    \filldraw[color=black,fill=white] (v\i') circle (1.5pt);
    \filldraw[color=black,fill=white] (w\i) circle (1.5pt);
  }
  \end{tikzpicture}
  \caption{The complexes $P-\alpha_w$ (left), $P-\alpha_w-\alpha_b$ (centre) and
    $P-\alpha_b$ (right) in the proof of Theorem~\ref{thm:projective} (case $d=2$).}
  \label{fig:cross}
\end{figure}

\begin{proof}
  Let $P$ be the $2$-coloured complex obtained from the boundary
  complex $\partial Q^{d+1}$ of the $(d+1)$-dimensional cross polytope
  $Q^{d+1}$ by colouring all the vertices of one facet (say
  $\alpha_w$) white, and all the vertices of the complementary facet
  (say $\alpha_b$) black. The complex $P-\alpha_w-\alpha_b$ obtained
  by deleting the facets $\alpha_w$ and $\alpha_b$ from $P$ contains
  $\partial \alpha_w$ and $\partial \alpha_b$ as
  subcomplexes. Similarly, $P-\alpha_w$ and $P-\alpha_b$ contain
  $\partial\alpha_w$ and $\partial\alpha_b$, respectively (see
  Figure~\ref{fig:cross} for an illustration). Glue
  $P-\alpha_w-\alpha_b$ to $P-\alpha_w$ along $\partial \alpha_w$ (by
  identifying the corresponding vertices of $\partial\alpha_w$), and
  then glue the result to $P-\alpha_b$ along $\partial \alpha_b$. The
  resulting complex $C$ is a centrally symmetric triangulation of
  $S^d$. Note that $C$ has a unique facet $\alpha'_w$ with all
  vertices white (coming from the facet $\alpha_w$ of $P-\alpha_b$)
  and a unique facet $\alpha'_b$ with all vertices black.

  Since $K$ is a subdivision of a $d$-simplex $\Delta^d$, there is a
  sequence of stellar moves (see e.g.~\cite[Theorem~4.5]{Lic99}) that
  transforms $\Delta^d$ into $K$. Assign to each vertex $v$ of the
  all-white facet $\alpha'_w$ of $C$ a vertex of $\Delta^d$ using an
  arbitrary bijection, and assign the same vertex of $\Delta^d$ to the
  antipode of $v$. Perform the corresponding sequence of stellar moves
  on $\alpha'_w$, colouring all new vertices white. Proceed
  analogously for $\alpha'_b$ with colours reversed. Finally, perform
  a stellar subdivision of each all-white facet with a black vertex,
  and of each all-black facet with a white vertex. Call the resulting
  $2$-coloured complex $\tilde K$.

  Since $\tilde K$ is a centrally symmetric subdivision of the centrally
  symmetric triangulation $C$ of $S^d$, it follows that $\tilde K$ is a
  centrally symmetric triangulation of $S^d$. Moreover, $\tilde K$ is equipped
  with an
  antisymmetric $2$-colouring such that no facet is
  monochromatic. Therefore, by Lemma~\ref{lem:quad}, the graph $G$
  obtained from the $1$-skeleton of $\tilde K$ by deleting
  monochromatic edges, and by identifying antipodal vertices, is a
  non-bipartite quadrangulation of $\RP^d$. We claim that
  $G \cong G_K$.  To see this, note that $\tilde K$ contains three
  types of vertices, each appearing in antipodal pairs consisting of a
  white vertex and a black vertex:
  \begin{itemize}
  \item[(W1)] vertices in the black and white copies of $K$ not
    contained in $P-\alpha_w-\alpha_b$;
    \item[(W2)] vertices subdividing each facet of the black or white copy of $K$;
    \item[(W3)] vertices in $P-\alpha_w-\alpha_b$.
  \end{itemize}
  There is an obvious bijection between the vertices of $G$ and those of $G_K$:
  the vertices of $G$ of type (W1), (W2) and (W3) are in bijection with the
  vertices of $G_K$ of type (V1), (V2) and (V3), respectively. It is not hard to
  check that the edges of $G$ are in bijection with the edges of $G_K$.
\end{proof}

Non-bipartite quadrangulations of $\RP^1$ are odd cycles, so they are
$3$-critical. Youngs~\cite{You96} showed that non-bipartite quadrangulations of
$\RP^2$ are $4$-chromatic, and conjectured that they are $4$-critical as long
as they contain no non-bipartite quadrangulation of $\RP^2$ as a proper
subgraph; this was subsequently proved by Gimbel and Thomassen~\cite{GT97}. For
$d \geq 3$, non-bipartite quadrangulations of $\RP^d$ are not $(d+2)$-critical
in general; indeed, their chromatic number is unbounded~\cite{KS15}.

\section{Proof of the main result}
\label{sec:main}

Our construction uses a specific convex $4$-polytope known as $H_8$ and
described in \cite[Section 7.2]{Gru03}. This polytope is $2$-neighbourly
(recall that a polytope is \emph{$t$-neighbourly} if every set of $t$ vertices
constitutes a face). Motzkin~\cite{Mot57} claimed that for even $d$, every
$d/2$-neighbourly $d$-polytope is combinatorially equivalent to a cyclic
polytope (see~\cite{Gru03} for the necessary definitions). This assertion is
true for $d$-polytopes with at most $d+3$ vertices~\cite{Gal63} but fails in
general; $H_8$ is one of the two counterexamples for $d=4$ constructed
in~\cite{Gru03}. Our example is in line with Sturmfels' remark~\cite{Stu88}
that $d+4$ vertices seem to be a `threshold of counterexamples'.

The convex $4$-polytope $H_8$ has $8$ vertices $A, B, C, D, E, F, G, Z$ and the
following are the $20$ facets of its boundary complex.
\begin{center}
  \begin{tabular}{ccccc}
    $ABCD$ & $\underline{ABCG}$ & $\underline{ABDE}$ & $ABEF$ & $\underline{ABFZ}$\\
    \boldmath$\underline{ABGZ}$ & $ACDZ$ & $\underline{ACGZ}$ & $\underline{ADEZ}$ & $AEFZ$\\
    $BCDE$ & $BCEF$ & $BCFG$ & $BFGZ$ & \boldmath$\underline{CDEF}$\\
    $\underline{CDFG}$ & $CDGZ$ & $\underline{DEFG}$ & $DEGZ$ & $EFGZ$
  \end{tabular}
\end{center}
We underlined the facets relevant for the proof of Lemma~\ref{lem:h8} below.
The two facets in bold play a key role in the proof of Theorem~\ref{thm:main},
and it will be convenient to name them. Throughout the remainder of this paper,
we let $\sigma_0 := ABGZ$ and $\sigma_1 := CDEF$.

\begin{lemma}\label{lem:h8}
  Let the vertices of $H_8$ be labelled by $1, 2, 3, 4$ in such a way that the
  facets $\sigma_0$ and $\sigma_1$ of $\partial H_8$ are rainbow. Then there exists
  another rainbow facet of $\partial H_8$.
\end{lemma}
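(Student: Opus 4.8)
The plan is to argue by contradiction, exploiting the fact that $\sigma_0 = ABGZ$ and $\sigma_1 = CDEF$ are \emph{complementary} facets spanning all eight vertices. Since both are rainbow, each of the labels $1,2,3,4$ occurs exactly once among $\{A,B,G,Z\}$ and exactly once among $\{C,D,E,F\}$. Writing $a,b,c,d,e,f,g,z$ for the labels of the correspondingly named vertices, this says precisely that $\{a,b,g,z\} = \{c,d,e,f\} = \{1,2,3,4\}$. The goal is to produce a rainbow facet among the seven underlined facets other than $\sigma_0,\sigma_1$.

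The key observation is that for a facet meeting $\sigma_0$ or $\sigma_1$ in three vertices, the rainbow condition collapses to a single equality of labels. For instance, in $ABCG$ the three vertices $A,B,G$ already carry three distinct labels whose complement in $\{1,2,3,4\}$ is $z$, so $ABCG$ is rainbow if and only if $c=z$. Working through the underlined facets this way, I would record: $ABCG$ rainbow iff $c=z$; $ACGZ$ rainbow iff $c=b$; $DEFG$ rainbow iff $g=c$; $ABFZ$ rainbow iff $f=g$; $CDFG$ rainbow iff $g=e$. The two remaining facets split $2+2$ between $\sigma_0$ and $\sigma_1$, and the rainbow condition there is disjointness of the two label pairs: $ABDE$ rainbow iff $\{d,e\}=\{g,z\}$, and $ADEZ$ rainbow iff $\{d,e\}=\{b,g\}$.

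Assuming for contradiction that none of these seven facets is rainbow, I would then chain the negated conditions. The non-rainbow conditions on $ABCG$, $ACGZ$ and $DEFG$ give $c\neq z$, $c\neq b$ and $c\neq g$, which forces $c=a$, the only remaining value of $\{a,b,g,z\}$. Consequently $\{d,e,f\}=\{b,g,z\}$. Since $ABFZ$ and $CDFG$ being non-rainbow give $f\neq g$ and $e\neq g$, the label $g$ must be carried by $D$, that is $d=g$, and hence $\{e,f\}=\{b,z\}$. Finally, with $d=g$ the non-rainbow conditions on $ABDE$ and $ADEZ$ read $e\neq z$ and $e\neq b$, contradicting $e\in\{b,z\}$.

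I expect the only real work to lie in the careful verification of the seven rainbow-condition translations; the logical chain itself is short but rigid, as it relies on using exactly $ABCG,ACGZ,DEFG$ to pin down $c=a$, then $ABFZ,CDFG$ to pin down $d=g$, and finally $ABDE,ADEZ$ to close the contradiction. This is precisely the role played by the underlined facets in the facet list, and checking that these are genuine facets of $\partial H_8$ is the one point where the specific combinatorics of $H_8$ enters.
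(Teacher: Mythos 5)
Your proof is correct and is essentially the paper's own argument: the same seven underlined facets, the same key observation (a facet sharing a $2$-face with $\sigma_0$ or $\sigma_1$ is rainbow iff the two vertices in the symmetric difference share a label), and the same deduction chain forcing $c=a$, then $d=g$, then a contradiction at $E$. The only cosmetic difference is that you keep the labels abstract where the paper normalises $A,B,G,Z$ to $1,2,3,4$ without loss of generality.
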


\begin{proof}
  We proceed by contradiction. Let $\varphi$ be the given labelling. Suppose
  that $\sigma_0$ and $\sigma_1$ are the only rainbow facets of $\partial H_8$,
  and assume (without loss of generality) that the vertices $A, B, G, Z$ are
  labelled $1, 2, 3, 4$ in order. Observe the following direct consequence of
  the assumptions.

  \begin{claim}\label{cl:diff}
    If a facet $\tau$ of $\partial H_8$ shares a $2$-dimensional face with
    $\sigma_i$ (for some $i\in\Setx{0,1}$), then the two vertices in the
    symmetric difference of $\tau$ and $\sigma_i$ have different labels.
  \end{claim}
  
  Applying Claim~\ref{cl:diff} to the facet $ABCG$ (and $\sigma_0$) we find
  $\varphi(C) \neq \varphi(Z) = 4$. Similarly, $\varphi(C)\neq 2$ (considering
  $ACGZ$ and $\sigma_0$) and $\varphi(C)\neq 3$ (considering $DEFG$ and
  $\sigma_1$). Consequently, $\varphi(C) = 1$.

  Neither $E$ nor $F$ have label $3$, as seen by applying Claim~\ref{cl:diff}
  to the facets $CDFG$ and $ABFZ$, respectively. Since $\sigma_1$ is rainbow,
  these vertices do not have label $1$ either, since it is already taken by
  $C$. Thus, $\Setx{\varphi(E),\varphi(F)} = \Setx{2,4}$. This implies that
  $\varphi(D) = 3$.

  The facet $ABDE$ shares two vertices $A,B$ with $\sigma_0$. Since $ABDE$ is
  not rainbow, we deduce that $\Setx{\varphi(D),\varphi(E)} \neq
  \Setx{\varphi(G),\varphi(Z)} = \Setx{3,4}$. Hence, $\varphi(E)\neq 4$ as we
  know that $\varphi(D)=3$. This leaves only the option $\varphi(E) = 2$ for
  the label of $E$. But then the facet $ADEZ$ is rainbow, a contradiction.
  \end{proof}

We remark that $H_8$ is far from the only polytope satisfying an analogue of
Lemma~\ref{lem:h8}. While it is the only such polytope among $4$-polytopes on
$8$ vertices which are \emph{simplicial} (all their faces are simplices; these
polytopes are listed in~\cite{GS67}), the situation is much different when the
number of vertices is increased. For instance, a complete search of the $23$
$2$-neighbourly $4$-polytopes on $9$ vertices listed in~\cite{AS73} showed that
$18$ of them have the same property as $H_8$.

\begin{proof}[Proof of Theorem~\ref{thm:main}]
  Let $K$ be the simplicial complex obtained from the boundary complex of $H_8$
  by deleting the simplex $\sigma_0$. Since $H_8$ is a $4$-polytope, we have
  $H_8 \cong B^4$, so $\partial H_8 \cong S^3$, and therefore $K \cong B^3$,
  or, equivalently, $K$ a triangulation of the $3$-simplex with vertices
  $A,B,G,Z$.

  For an integer $d \geq 3$, let $\rho$ be a $(d-4)$-simplex, and define $K_d=K
  \star \rho$ (we take the convention that the empty simplex has dimension
  $-1$; in particular, $K_3=K$). We can view $K_d$ as a triangulation of the
  $d$-simplex with vertices $\{A,B,G,Z\}\cup V(\rho)$. We will show that in any
  Sperner labelling of $K_d$, there is a rainbow facet distinct from
  $\sigma:=\sigma_1 \star \rho$. Fix any Sperner labelling $\lambda:V(K_d) \to \{1, \ldots,
  d+1\}$ of $K_d$ (observe that $\rho$ is rainbow with respect to $\lambda$).
  If $\sigma$ is not rainbow, then by Sperner's lemma, there must be a rainbow
  facet distinct from $\sigma$ and we are done. We may thus assume that
  $\sigma$ is rainbow. This means that the simplices $\sigma_0$ and $\sigma_1$
  receive the same set of labels. In particular, the labelling $\lambda'$ of
  $V(K)$ induced by $\lambda$ uses $4$ labels. By Lemma~\ref{lem:h8}, some
  $3$-simplex $\tau \in K$ distinct from $\sigma_1$ is rainbow with respect to
  $\lambda'$. Hence, $\tau \star \rho$ is a rainbow $d$-simplex with respect to
  $\lambda$. We have thus shown that any Sperner labelling of $K_d$ has a
  rainbow $d$-simplex distinct from $\sigma$. This provides a negative answer 
  to Gallai's question for all $d \geq 3$.
  
  It remains to show that, for every $d \geq 3$, there are infinitely many
  triangulations of $\Delta^d$ with this property. Consider any facet $\pi \in K_d$ distinct from $\sigma$, and let $L$
  be a triangulation of $\pi$ such that $\partial L=\partial \pi$. Let $K_d'$
  be the complex obtained from $K_d$ by replacing $\pi$ by $L$. Fix any Sperner
  labelling $\lambda:V(K_d') \to \{1, \ldots, d+1\}$, and suppose that no facet
  of $K_d'$ distinct from $\sigma$ is rainbow. The $d+1$ vertices in $\partial
  L$ must receive labels $1, \ldots, d+1$. Therefore, the labelling induced by
  $L$ is a Sperner labelling of $L$, so by Sperner's lemma, there is at least
  one rainbow facet in $L$. This shows that any Sperner labelling of $K_d'$ has
  a rainbow $d$-simplex distinct from $\sigma$.

  By iterating this construction, we can obtain infinitely many triangulations
  $K$ of $\Delta^d$ with the property that every Sperner labelling of $K$ has a
  rainbow $d$-simplex distinct from $\sigma$.
\end{proof}

It may well be that even in higher dimensions, the property in
Problem~\ref{prob:unique} holds for triangulations of a special type. For
example, one could take cyclic polytopes (a simple and well-understood class)
and consider the triangulations associated with them in the following sense.
Given a polytope $P$, let us say that a triangulation is \emph{associated with
$P$} if it is obtained from the boundary of $P$ by removing a facet $\tau$ and
projecting the rest of the boundary into the convex hull of the vertex set of
$\tau$.

\begin{conjecture}
  Problem~\ref{prob:unique} has an affirmative answer if $K$ is a triangulation
  of $\Delta^d$ associated with a $d$-dimensional cyclic polytope.
\end{conjecture}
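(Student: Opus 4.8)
The plan is to recast the problem on the sphere and then exploit the combinatorics of the moment curve. Write $C$ for the cyclic polytope in question and $\tau$ for the facet whose removal and projection into the convex hull of its vertices produces $K$; thus $\partial C \cong S^d$ is recovered from $K$ by filling in the single facet $\tau$, and the $d+1$ corners of $\Delta^d$ are exactly the vertices of $\tau$. A Sperner labelling of $K$ assigns these corners distinct labels, so $\tau$ is automatically rainbow when viewed inside $\partial C$. The statement to prove is therefore equivalent to: for the prescribed facet $\sigma$, there is a labelling $\lambda\colon V(\partial C)\to\{1,\dots,d+1\}$, giving the vertices of $\tau$ distinct labels, whose only two rainbow facets are $\tau$ and $\sigma$. (Equivalently, by Proposition~\ref{prop:critical}, one must show $G_K$ is $(d+2)$-critical, but the spherical formulation is more convenient here.) As a sanity check, when $\dim K = 2$ the space $\partial C$ is a planar triangulation of $S^2$ and the claim is already contained in Theorem~\ref{thm:2d}; the substance of the conjecture lies in higher dimensions.

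The engine of the construction is Gale's evenness condition. Order the vertices $1<2<\cdots<n$ along the moment curve; a $(d+1)$-subset $S$ is a facet of $\partial C$ precisely when any two vertices outside $S$ are separated, in this order, by an even number of vertices of $S$ -- equivalently, when every maximal run of $S$ flanked on both sides by non-vertices of $S$ has even length. I would first analyse labellings that are monotone along the order, determined by a partition of $1<\cdots<n$ into $d+1$ consecutive blocks $B_1<\cdots<B_{d+1}$ with every vertex of $B_j$ receiving label $j$. Under such a labelling a facet is rainbow exactly when it is a transversal of the blocks, meeting each $B_j$ once. Combined with Gale evenness this is extremely restrictive: a transversal spread out across the blocks produces many interior runs of length one, which are forbidden, so a rainbow facet is forced to concentrate its elements into even runs pinned against the two ends of the order. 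Choosing the block boundaries to fall between consecutive vertices of $\tau$ makes $\tau$ a transversal, hence rainbow, and the parity mechanism eliminates the overwhelming majority of the remaining facets.

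The difficulty -- and the reason the statement is so far only conjectural -- is to realise exactly the prescribed $\sigma$, together with $\tau$, as the two surviving rainbow facets for an \emph{arbitrary} target $\sigma$. A purely monotone labelling keyed to $\tau$ will in general not make $\sigma$ rainbow, so I would enrich the family in one of two ways. The first is to perturb a monotone labelling by introducing localized defects (swapping labels on short order-intervals) designed to toggle the rainbow status of individual facets; the task is to switch $\sigma$ on while switching every other transversal off, controlling the interactions between defects through the run-length parity bookkeeping above. The second, and I expect more robust, route is induction on the number $n$ of vertices via the beneath--beyond description of $C$: deleting the last moment-curve vertex passes from the cyclic polytope on $n$ vertices to the one on $n-1$ vertices, and one would maintain the invariant ``exactly two rainbow facets, $\tau$ and $\sigma$'' across this step, with a small base case ($n$ close to $d+2$, or the planar case supplied by Theorem~\ref{thm:2d}).

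The main obstacle is precisely this simultaneous control: keeping $\sigma$ rainbow while destroying every other rainbow facet, uniformly in the position of $\sigma$ within the moment-curve order. The parity argument prunes candidate facets efficiently, but the local toggles interact, and in the inductive approach the ``exactly two'' invariant must survive the insertion or deletion of a vertex, which alters the facet structure in a way governed -- but not trivially -- by Gale evenness. Disentangling these interactions, most plausibly by matching the recursion to the poset structure that organises triangulations of cyclic polytopes, is the crux of the problem.
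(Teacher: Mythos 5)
This statement is the closing \emph{conjecture} of the paper: the authors offer no proof of it, and indeed Theorem~\ref{thm:main} shows that the analogous property fails for general triangulations, which is precisely why the cyclic case is left open. Your submission, by its own admission (``the reason the statement is so far only conjectural''), is likewise not a proof but a research programme: the monotone-labelling analysis via Gale's evenness condition prunes candidate rainbow facets, but the crux---simultaneously making an \emph{arbitrary} prescribed facet $\sigma$ rainbow while killing every other transversal, or maintaining the ``exactly two rainbow facets'' invariant under vertex deletion---is exactly the step you leave unresolved. A proposal that stops at the acknowledged hard step cannot be credited as a proof of the conjecture.

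Beyond the incompleteness, there is a concrete error in your opening reduction. You claim the conjecture is \emph{equivalent} to producing a labelling $\lambda\colon V(\partial C)\to\{1,\dots,d+1\}$ whose only rainbow facets are $\tau$ and $\sigma$. One direction is fine: a Sperner labelling of $K$ with unique rainbow facet $\sigma$ does yield such a labelling of $\partial C$ (and a standard parity count---each $(d-1)$-simplex of the sphere lies in exactly two facets---shows two is the minimum possible number of rainbow facets, so the target is well chosen). But the converse fails: a Sperner labelling must also satisfy the boundary condition that every vertex of $K$ lying in a proper face of $\Delta^d$ (i.e., projecting into a proper face of the convex hull of $V(\tau)$) carries a label of one of that face's corners, and a spherical labelling with exactly two rainbow facets need not restrict to one satisfying this. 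This is not a pedantic point for your construction: the blocks of a monotone labelling are intervals in the moment-curve order, whereas the boundary condition is governed by the face structure of $\tau$ under the projection, which is not order-monotone in general; so even a successful execution of your parity bookkeeping on $\partial C$ would not, as stated, produce a Sperner labelling of $K$. Any serious attack along your lines must carry the boundary constraint through the whole argument rather than discard it at the first step.
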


\section*{Acknowledgements}

We are indebted to Frank Lutz for the valuable information on small
triangulations of manifolds collected in~\cite{Lutz}. We used Sage to test
these triangulations (as well as those obtained from~\cite{AS73,GS67}) for
specific properties, and we thank the developers of this fine piece of
software. Finally, we would like to thank the referees for making some useful
suggestions to improve the readability of the paper.


\begin{thebibliography}{99}
\bibitem{Aks74}
V.~A. Aksenov.
\newblock On continuation of 3-colouring of planar graphs (in Russian).
\newblock {\em Diskret. Anal. Novosibirsk}, 26:3--19, 1974.

\bibitem{AS73}
A.~Altshuler and L.~Steinberg.
\newblock Neighborly $4$-polytopes with $9$ vertices.
\newblock {\em J. Combin. Theory Ser. A}, 15:270--287, 1973.

\bibitem{BM08}
J.~A. Bondy and U.~S.~R. Murty.
\newblock {\em Graph Theory}, volume 244 of {\em Graduate Texts in
  Mathematics}.
\newblock Springer, New York, 2008.

\bibitem{BDKLY14}
O.~V. Borodin, Z.~Dvo\v{r}\'{a}k, A.~V. Kostochka, B.~Lidick\'{y}, and
  M.~Yancey.
\newblock Planar $4$-critical graphs with four triangles.
\newblock {\em European J. Combin.}, 41:138--151, 2014.

\bibitem{DLGMM19}
J.~A. De~Loera, X.~Goaoc, F.~Meunier, and N.~H. Mustafa.
\newblock The discrete yet ubiquitous theorems of Carath\'{e}odory, Helly,
  Sperner, Tucker, and Tverberg.
\newblock {\em Bull. Amer. Math. Soc. (N.S.)}, 56(3):415--511, 2019.

\bibitem{dL13}
M.~de~Longueville.
\newblock {\em A course in Topological Combinatorics}.
\newblock Universitext. Springer, New York, 2013.

\bibitem{Gal63}
D. Gale.
Neighborly and cyclic polytopes.
\emph{Convexity} (2nd ed.), Proc. Symp. Pure Math., 7:225--232, 1963.

\bibitem{GT97}
J.~Gimbel and C.~Thomassen.
\newblock Coloring graphs with fixed genus and girth.
\newblock {\em Trans. Amer. Math. Soc.}, 349(11):4555--4564, 1997.

\bibitem{Gru63}
B.~Gr\"{u}nbaum.
\newblock Gr\"{o}tzsch's theorem on {$3$}-colorings.
\newblock {\em Michigan Math. J.}, 10:303--310, 1963.

\bibitem{Gru03}
B.~Gr\"{u}nbaum.
\newblock {\em Convex Polytopes}, volume 221 of {\em Graduate Texts in
  Mathematics}.
\newblock Springer-Verlag, New York, second edition, 2003.
\newblock Prepared and with a preface by Volker Kaibel, Victor Klee and
  G\"{u}nter M. Ziegler.

\bibitem{GS67}
B.~Gr\"{u}nbaum and V.~P. Sreedharan.
\newblock An enumeration of simplicial $4$-polytopes with $8$ vertices.
\newblock {\em J. Combin. Theory}, 2:437--465, 1967.

\bibitem{JT95}
T.~R. Jensen and B.~Toft.
\newblock {\em Graph Coloring Problems}.
\newblock Wiley-Interscience Series in Discrete Mathematics and Optimization.
  John Wiley \& Sons, Inc., New York, 1995.
\newblock A Wiley-Interscience Publication.

\bibitem{KS15}
T.~Kaiser and M.~Stehl\'{\i}k.
\newblock Colouring quadrangulations of projective spaces.
\newblock {\em J. Combin. Theory Ser. B}, 113:1--17, 2015.

\bibitem{Lic99}
W.~B.~R. Lickorish.
\newblock Simplicial moves on complexes and manifolds.
\newblock In {\em Proceedings of the Kirbyfest (Berkeley, CA, 1998)},
  volume~2 of {\em Geom. Topol. Monogr.}, pages 299--320. Geom. Topol. Publ.,
  Coventry, 1999.

\bibitem{Lutz}
F.~Lutz. The Manifold Page. Available at \texttt{https://page.math.tu-berlin%
  .de/\~{}lutz/stellar}, retrieved on December 21, 2022.

\bibitem{MT01}
B.~Mohar and C.~Thomassen.
\newblock {\em Graphs on Surfaces}.
\newblock Johns Hopkins Studies in the Mathematical Sciences. Johns Hopkins
  University Press, Baltimore, MD, 2001.

\bibitem{Mot57}
T.~S.~Motzkin.
\newblock Comonotone curves and polyhedra. Abstract 111.
\newblock {\em Bull. Amer. Math. Soc.}, 63:35, 1957.

\bibitem{Mun84}
J.~R. Munkres.
\newblock {\em Elements of Algebraic Topology}.
\newblock Addison-Wesley Publishing Company, Menlo Park, CA, 1984.

\bibitem{NT75}
F.~Nielsen and B.~Toft.
\newblock On a class of planar $4$-chromatic graphs due to T.~Gallai.
\newblock In {\em Recent Advances in Graph Theory (Proc. Second
  Czechoslovak Sympos., Prague, 1974)}, pages 425--430. Academia, Prague,
  1975.

\bibitem{RT85}
V.~R\"{o}dl and Zs.~Tuza.
\newblock On color critical graphs.
\newblock {\em J. Combin. Theory Ser. B}, 38:204--213, 1985.

\bibitem{Spe28}
E.~Sperner.
\newblock Neuer Beweis f\"{u}r die Invarianz der Dimensionszahl und des
  Gebietes.
\newblock {\em Abh. Math. Sem. Univ. Hamburg}, 6(1):265--272, 1928.

\bibitem{Stu88}
B.~Sturmfels.
\newblock Some applications of affine Gale diagrams to polytopes with few
  vertices.
\newblock {\em SIAM J. Discrete Math.}, 1(1):121--133, 1988.

\bibitem{Su99}
F.~E. Su.
\newblock Rental harmony: Sperner's lemma in fair division.
\newblock {\em Amer. Math. Monthly}, 106(10):930--942, 1999.

\bibitem{Sun14}
A.~Sun.
\newblock To divide the rent, start with a triangle.
\newblock {\em New York Times}, April 29, 2014.

\bibitem{You96}
D.~A. Youngs.
\newblock {$4$}-chromatic projective graphs.
\newblock {\em J. Graph Theory}, 21(2):219--227, 1996.

\bibitem{Zie95}
G.~M. Ziegler.
\newblock {\em Lectures on Polytopes}, volume 152 of {\em Graduate Texts in
  Mathematics}.
\newblock Springer-Verlag, New York, 1995.
\end{thebibliography}
\end{document}